\documentclass[11pt,a4paper]{article}
\usepackage[T1]{fontenc}
\usepackage[utf8]{inputenc}
\usepackage{authblk}
\usepackage[all]{xy}
\usepackage{amsthm}
\usepackage{amsmath}
\usepackage{amssymb}
\usepackage{color}
\usepackage{float}
\usepackage{graphicx}
\usepackage[english]{babel}
\usepackage{verbatim}
\setlength{\parskip}{1em}
\theoremstyle{plain}

\def\Rc{{\mathcal{R}}}

\def\Pb{{\mathbb P}}

\def\Zb{{\mathbb Z}}
\def\Cb{{\mathbb C}}
\def\Qb{{\mathbb Q}}

\def\Bc{{\mathcal B}}

\def\Gc{{\mathcal G}}

\def\Tc{{\mathcal T}}

\def\C~{\widetilde{C}}

\def\QP{{Q_{_P}}}

\def\Circlearrowright{\ensuremath{%
  \rotatebox[origin=c]{180}{$\circlearrowright$}}}
\newcommand{\q}[1]{``#1''}

\newtheorem{theorem}{Theorem}[section]
\newtheorem{definition}[theorem]{Definition}
\newtheorem{lemma}[theorem]{Lemma}
\newtheorem{example}[theorem]{Example}

\newtheorem{corollary}[theorem]{Corollary}
\newtheorem{solution*}{Solution}

\newtheorem{proposition}[theorem]{Proposition}
\newtheorem{remark}[theorem]{Remark}

\numberwithin{equation}{section}

\newcommand{\HOM}{\operatorname{Hom}}

\newcommand{\Proj}{\operatorname{Proj}}

\DeclareMathOperator{\Hom}{\mathscr{Hi}\text{\kern -3pt {\calligra\large om}}\,}

\DeclareMathOperator{\col}{col}

\title{Smooth Toric Quiver Varieties}
\author{Amir Nasr\thanks{amirnasr@ipm.ir}}
\affil{School of Mathematics, IPM, Tehran, Iran}

\date{\today}

\begin{document}
\maketitle
\begin{abstract}
We study smoothness of toric quiver varieties. When a quiver $Q$ is defined with the identity dimension vector,  the corresponding quiver variety is also a toric variety. So it has both fan representation and quiver representation. We work only on quivers with canonical weight and we classify smooth such toric quiver varieties. We show that a variety corresponding to a quiver with the identity dimension vector and the canonical weight is smooth if and only if it is a product of projective spaces or their blowups. 
\end{abstract}
\maketitle
\begin{section}{Introduction}
Toric varieties and quiver varieties are two types of varieties with combinatorial information. A toric variety can be constructed from a fan in a lattice and a quiver variety is the moduli space of representations of a quiver. In this project we work on varieties with both properties. Namely, if $Q=(Q_0, Q_1)$ is a quiver with the identity dimension vector, then the moduli space of representations of $Q$ which we call a quiver variety, has the structure of a toric variety.

Let $Q=(Q_0, Q_1)$ be a quiver with the identity dimension vector. Then to define the moduli space we need to consider a stability condition, called weight of the quiver, $w:Q_0\rightarrow \Zb$. The weight we work with is the most natural weight where we assign to each vertex the number of its in-coming arrows minus the number of its out-going arrows, which is what we call the canonical weight. Considering the identity dimension vector and the canonical weight, we classify smooth toric quiver varieties. 

In Section 2 we give an introduction about toric quiver varieties and we show how a quiver with the identity dimension vector has the structure of a toric variety. In Section 3 we define the correspondence between a fan and the quiver of a toric quiver variety. We will see that we can find a fan representation of the variety directly from the quiver and we do not need to find the variety first. In Section 4 we define simple quivers and we show how a quiver can be contracted to a simple one. Moreover, we will show that the toric quiver variety may not be changed or blown down after contracting a quiver to a simple quiver. In section 5 we classify smooth toric quiver varieties which is the main result of the paper. We first classify smooth toric quiver varieties coming from a simple quiver and then use the results of Section 4 to generalize the result and classify all smooth toric quiver varieties.
\end{section}
\begin{section}{Definitions and notations}\label{section2}
A quiver is a pair $Q=(Q_0, Q_1)$, where $Q_0=\{v_i\}_{1\leq i\leq n}$ is the set of vertices and $Q_1=\{v_i \xrightarrow{x}v_j\}_{i, j}$ is the set of arrows. The start point (vertex) and the end point (vertext) of an arrow are called the source and the target respectively. A path is a set of arrows and vertices such that we start from a vertex, go through the arrows and end at another vertex. A cycle is a path where each vertex and arrow is gone through only once and the start and end points are the same. A directed cycle is a cycle in which every arrow is passed from its source toward its target. In this paper we only work on quivers with no directed cycles. Between two vertices there might be one or more arrows, so a cycle can contain only two vertices and a pair of arrows between them, then of course all arrows between two vertices should be in the same direction. We call a cycle proper if it contains no more than one arrow between each pair of vertices. Two cycles are called edge connected if they share at least one arrow. By induction, a set of cycles is called edge connected if we can find one of them edge connected to the rest and the rest are also edge connected. 

We use a capital letter $X$ for a bunch of arrows between two vertices and the number of arrows in a bunch is called the multiplicity, $m_X$. If $m_X>1 (=1)$, then $X$ is called a multiple-arrow $($single-arrow$)$. 

Let $Q=(Q_0, Q_1)$ be a quiver. A dimension vector is a tuple $d=(d_i)_i\in\Zb^{Q_0}$. Then to each vertex $v_i$ a vector space $M_i$ of dimension $d_i$ is assigned and a representation of $Q$ is
\[
R_d=\bigoplus_{x:v_i\rightarrow v_j}\HOM(M_i, M_j).
\]

Also there is an algebraic group
\[
G_d=\prod_{v_i}GL(M_i)
\]
 acting on $R_d$ as $g.M:=(g_i)_i.(M_{\alpha})_{\alpha}=(g_iM_{\alpha}g_j^{-1})_{\alpha:v_i\rightarrow v_j}$. The $G_d$-orbits correspond bijectively to the isomorphism classes of representations of $Q$ \cite{R}.

 For the purpose of working on toric quiver varieties, in this paper we only work with the identity dimension vector, i.e.  $d(v)=1$ for any vertex $v$ (see more details in Remark \ref{torus}). Hille calls representations of such quivers \q{thin sincere representations}\cite{Hi}. Then we write $\Rc:=R_d=\bigoplus_{x\in Q_1}\Cb$ and  $\Gc:=G_d=(\Cb^*)^{|Q_0|}$. So a representation is in the form of the tuple $(\nu_x)_{x\in Q_1}\in \Cb^{|Q_1|}$. Therefore we can assign to each arrow $x$ a variable and if there is no ambiguity we show it by the same notation $x$.  Similarly, $X^{\alpha}$ may refer to a monomial which is multiplication of (non-negative) powers of all the correspondent variables.

To define the moduli space of representations of a quiver we consider a linear function $\theta:Q_0\rightarrow \Zb$ called the weight such that $\theta.d=0$. One perhaps say the most obvious weight is the canonical weight where $\theta(v)$ is the number of arrows with their target at $v$ minus the number of arrows with their source at $v$, so $\theta.d=\sum_{v\in Q_0}\theta(v)=0$. Considering the canonical weight, any thin sincere representation is $\theta$-stable, see \cite[Definition 1.3]{Hi}.

Read \cite{S} to see how the moduli space is defined for $\theta$-stable thin sincere representations for any weight $\theta$. Since we restrict our self to the canonical weight, we rephrase the definition as follows.
\begin{definition}\label{moduli space}
Let $Q=(Q_0, Q_1)$ be a quiver with the identity dimension vector and the canonical weight. Consider a function $w:Q_1\rightarrow \Zb^{|Q_0|}$ such that $w(v_i \xrightarrow{x}v_j)$ gets zeros everywhere except in the $i$-th and $j$-th positions which gets $-1$ and $1$ respectively. If $x$ refers to a variable, we define $w(x)$ the same as its corresponding arrow and for any other variable $y$, $w(xy):=w(x)+w(y)$. Then by induction $w(X^{\alpha})$ is defined for any monomial $X^{\alpha}$. Now a $\theta$-invariant monomial of $Q$ is a monomial $X^{\alpha}$ where $w(X^{\alpha})=\theta(Q_0)$. Then the moduli space is the variety
\[
X_Q=\Proj\Cb[X^{\alpha}|~~w(X^{\alpha})=\theta(Q_0)]
\]
\end{definition}
\begin{proposition}
Let $X_Q$ be the moduli space of $\theta$-stable thin sincere representation as defined in Definition \ref{moduli space}. Then $X_Q$ is a projective variety and the points are isomorphism classes of $\theta$-stable thin sincere representations.\cite{K}
\end{proposition}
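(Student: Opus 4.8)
The statement is King's theorem \cite{K}, so the plan is to recognize $X_Q$ as a Geometric Invariant Theory (GIT) quotient and then transcribe King's general result to the present normalization (identity dimension vector, canonical weight). First I would set up the quotient: the weight $\theta$ determines a character $\chi_\theta\colon\Gc\to\Cb^*$ by $\chi_\theta(g)=\prod_i g_i^{\theta(v_i)}$, and King's moduli space of $\theta$-semistable representations is the GIT quotient
\[
\Rc /\!/_{\chi_\theta}\Gc=\Proj\bigoplus_{n\geq 0}\Cb[\Rc]^{\Gc,\,n\chi_\theta},
\]
where $\Cb[\Rc]^{\Gc,\,n\chi_\theta}=\{f\in\Cb[\Rc]\mid f(g\cdot v)=\chi_\theta(g)^n f(v)\}$ is the space of semi-invariants of weight $n\chi_\theta$. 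The goal of this first step is to match the right-hand ring with the ring in Definition \ref{moduli space}.

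Second, I would identify the semi-invariants with the $\theta$-invariant monomials. Because $\Gc=(\Cb^*)^{|Q_0|}$ is a torus and $\Cb[\Rc]=\Cb[x\mid x\in Q_1]$ is a polynomial ring, the action is diagonal and $\Cb[\Rc]$ is multigraded by the character lattice $\Zb^{Q_0}$; a monomial $X^\alpha$ sits in the graded piece whose weight is precisely $w(X^\alpha)$ of Definition \ref{moduli space} (up to the standard sign convention identifying the weight of a semi-invariant with $w$). Hence, choosing the character $\chi_\theta$ (or its inverse, according to that convention), $X^\alpha$ is an $n\chi_\theta$-semi-invariant if and only if $w(X^\alpha)=n\,\theta(Q_0)$, and the $n$-th graded piece of King's ring is spanned by such monomials. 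In particular its degree-one piece is spanned by the generators $\{X^\alpha\mid w(X^\alpha)=\theta(Q_0)\}$ appearing in the definition, so the two $\Proj$'s agree, using the Veronese identity $\Proj R=\Proj R^{(m)}$ to pass between the subalgebra generated in degree one and the full semi-invariant ring where necessary.

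Third, I would check that $X_Q$ is a genuine projective variety. Since $\Gc$ is a torus, hence reductive, Hilbert's finiteness theorem (equivalently Gordan's lemma applied to the relevant cone of exponent vectors) shows the total semi-invariant ring is a finitely generated $\Cb$-algebra. Its degree-zero part is the invariant ring $\Cb[\Rc]^{\Gc}$, spanned by monomials $X^\alpha$ with $w(X^\alpha)=0$; such a monomial is supported on a union of directed cycles, so the standing hypothesis that $Q$ has no directed cycle forces $\Cb[\Rc]^{\Gc}=\Cb$. Therefore the semi-invariant ring is a finitely generated graded algebra with degree-zero part $\Cb$, and $X_Q=\Proj$ of it is projective over $\Spec\Cb$, i.e. a projective variety.

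Finally, the modular statement. By King's theorem the closed points of the GIT quotient are in bijection with the closed $\Gc$-orbits of $\theta$-semistable points, equivalently with $S$-equivalence classes of $\theta$-semistable representations. Here I would invoke the remark recorded before Definition \ref{moduli space}: for the canonical weight every thin sincere representation is $\theta$-stable, so $\theta$-semistability coincides with $\theta$-stability and each $S$-equivalence class reduces to a single isomorphism class. Thus the points of $X_Q$ are exactly the isomorphism classes of $\theta$-stable thin sincere representations. I expect the main obstacle to be this last step, namely importing King's stability and $S$-equivalence dictionary and verifying that semistable $=$ stable in our normalization, together with the bookkeeping in the second step needed to guarantee that the subalgebra generated in degree one computes the same $\Proj$ as the full ring of semi-invariants.
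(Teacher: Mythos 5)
The paper gives no proof of this proposition---it is stated as a direct citation of King's theorem---and your writeup is a correct expansion of exactly that citation: the GIT quotient by the character $\chi_\theta$, the identification of torus semi-invariants with the $\theta$-invariant monomials, projectivity from $\Cb[\Rc]^{\Gc}=\Cb$ in the absence of directed cycles, and the reduction of $S$-equivalence to isomorphism because every thin sincere representation is stable for the canonical weight. The one step you leave implicit---that the semi-invariant ring is generated in degree one, so that the paper's $\Proj\Cb[X^{\alpha}\mid w(X^{\alpha})=\theta(Q_0)]$ really computes the same variety as King's $\Proj$ of the full semi-invariant ring---is a genuine point but a standard one: the incidence matrix of a quiver is totally unimodular, hence the associated flow polytope is normal and the graded pieces are generated by products of degree-one monomials.
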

\begin{remark}\label{torus}
$\Gc:=(\Cb^*)^{|Q_0|}$ is a torus acting on the affine space $\Rc=\bigoplus_{x\in Q_1}\Cb$. Since the scalar torus acts trivially on $\Rc$, we can consider $P\Gc=\Gc/\Cb^*$ and the orbits of this action corresponds bijectively to the isomorphism classes of $\theta$- stable this sincere representations. On the other hand, there exists the torus $(\Cb^*)^{|Q_1|}$ acting on the affine space $\Rc$ via multiplication on the left. Therefore, the torus $\Tc=(\Cb^*)^{|Q_1|}/P\Gc\cong(\Cb^*)^{|Q_1|-|Q_0|+1}$ acts on the space of isomorphism classes of $\theta$- stable this sincere representations and so, $\Tc$ acts on the moduli space $X_Q$. This makes the $X_Q$ be a toric variety.
\end{remark}
Note that since the weight we always work with is the canonical weight we may drop $\theta$ and simply write stable and invariant which mean $\theta$- stable and $\theta$- invariant receptively.
\end{section}
\begin{section}{Fan Representation of a Quiver}\label{section3}
In this section we explain how to find the fan representation of a toric quiver variety. Let $Q$ be a quiver with the identity dimension vector. Then $X_Q$ is a toric quiver variety and so it has a fan representation. We show how to construct the fan directly from the quiver. 
\begin{definition}
Consider the free abelian group over the set of arrows. By abuse of notation we write the bases of the free group by $\{x\in Q_1\}$. Then if $C$ is a path (cycle)  with arrows $x_1, \ldots, x_i$, we write it as $C:\pm x_1\pm\cdots\pm x_i$ where for each arrow the sign depends on its direction in the path $($cycle$)$. Two or more cycles are called independent if non of them can be written as an integer sum of the others. Once a single-arrow is replaced by a multiple-arrow with multiplicity $m$, then the multiple arrow adds $m-1$ independent cycles. A set of independent cycles which generates all cycles is called a basis of cycles and shown as $\Bc^{\tiny \Circlearrowright}$. Then the dimension of the quiver variety is the same as the size of  $\Bc^{\tiny \Circlearrowright}$.
\end{definition}
Read \cite{S} to see how the fan representation of a toric quiver varieties is defined. Here we bring a short description of the procedure.
\begin{definition}
Supporting quiver of a quiver $Q$, shown as $Q^s$, is a quiver with vertices $Q^s_0=Q_0$  and each multiple-arrow is replaced by an arrow in the same direction.
\end{definition}
Correspondent matrix of a quiver is an $m\times n$-matrix where $m$ is the number of arrows  and $n$ is the number of vertices. Each row represents an arrow and each column represents a vertex. In any row  there is only two non-zero entries, a $^{'}-1^{'}$ for the source and a $^{'}1^{'}$ for the target of the corresponding arrow, namely, for any arrow $x$, the row corresponding to $x$ is $w(x)$. Then a Gale Matrix of $M$, $M_G$, is a $k\times n$ matrix defined as follows.
\begin{definition}
Let $M$ be the correspondent matrix of a quiver $Q$. Columns of $M$ generate a sub-space of $\Qb^n$. Let $\{v_{i_1}, \ldots, v_{i_{n-k}}\}$ be a basis for $\col(M)$ and let $\{u_1, \ldots, u_k\}$ be an extension to a basis of ~$\Qb^n$; $\Bc=\{v_{i_1}, \ldots, v_{i_{n-k}}, u_1, \ldots, u_k\}$. Let $A$ be the transition matrix of changing the basis $\Bc$ to the standard basis, i.e. $[\Bc]A=I_n$. Now $M_G$ is the last $k$ rows of $A$.\\
\begin{proposition}
Then each arrow $x$ corresponds to a column of $M_G$ which represents  a ray, shown by $\rho_x$, in the fan representation of the toric quiver variety. Note that $M_G.M=0$ and since the fan representation is independent from the choose of $\Bc$, any $k\times n$ matrix $\Gamma$ with the property $\Gamma.M=0$ works. \cite{S}
\end{proposition}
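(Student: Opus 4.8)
The plan is to read off the fan of $X_Q$ directly from the torus action of Remark \ref{torus} and then to match the resulting ray generators with the columns of $M_G$. Write $m=|Q_1|$ and $n=|Q_0|$, and regard $M$ as the linear map $M\colon\Qb^{n}\to\Qb^{m}$ given by the correspondent matrix, so that its column space $\col(M)=\Ima(M)$ is a subspace of $\Qb^{m}$. Since $Q$ is connected and each row $w(x)$ has entry sum $0$, the vector $(1,\dots,1)$ spans $\ker(M)$ and $\mathrm{rank}(M)=n-1$; hence $\dim\col(M)=n-1$ and $k:=m-\dim\col(M)=m-n+1=|\Bc^{\tiny \Circlearrowright}|=\dim X_Q$.

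First I would translate Remark \ref{torus} into cocharacter lattices. The group $\Gc=(\Cb^{*})^{n}$ acts on the coordinate of an arrow $x\colon v_i\to v_j$ through the character $g\mapsto g_i g_j^{-1}$, so on cocharacters the homomorphism $\Gc\to(\Cb^{*})^{m}$ is, up to sign, exactly $M\colon\Zb^{n}\to\Zb^{m}$, and the diagonal $\Cb^{*}$ maps to $M(1,\dots,1)=0$. Thus $P\Gc=\Gc/\Cb^{*}$ embeds into $(\Cb^{*})^{m}$ along $M$, and the torus $\Tc=(\Cb^{*})^{m}/P\Gc$ of Remark \ref{torus} has cocharacter lattice $N=\Zb^{m}/\Ima(M)$, with $N_{\Qb}=\Qb^{m}/\col(M)$ of rank $k$. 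This produces the exact sequence
\[
0\longrightarrow \Ima(M)\longrightarrow \Zb^{m}\xrightarrow{\;\pi\;} N\longrightarrow 0,
\]
which is the standard presentation of $X_Q$ as the toric quotient of $\Rc=\Cb^{m}$ by $P\Gc$. Its fan lives in $N_{\Qb}$, and by the construction recalled from \cite{S} each ray is generated by the image $\pi(e_x)$ of a coordinate vector, giving one ray $\rho_x$ per arrow $x$.

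Next I would identify $M_G$ with $\pi$. By construction the rows of $M_G$ are the members of the dual basis of $\Bc$ attached to the vectors $u_1,\dots,u_k$ that extend a basis of $\col(M)$ to a basis of $\Qb^{m}$; hence they span the annihilator $\col(M)^{\perp}$, and $M_G\colon\Qb^{m}\to\Qb^{k}$ has kernel exactly $\col(M)$ and factors as an isomorphism $N_{\Qb}=\Qb^{m}/\col(M)\xrightarrow{\sim}\Qb^{k}$. In particular the columns of $M$ all lie in $\col(M)=\ker(M_G)$, so $M_G\cdot M=0$; and the $x$-th column $M_G e_x$ is the image of $\pi(e_x)$ under this isomorphism, i.e. the ray $\rho_x$. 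This settles the first two assertions.

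Finally, for independence of the choices: any two admissible matrices $M_G,M_G'$ (arising from different bases $\Bc$) both have full rank $k$ and kernel $\col(M)$, so $M_G'=U M_G$ for some $U\in GL_k(\Qb)$; such a $U$ is an automorphism of $N_{\Qb}$ carrying the fan to a linearly isomorphic fan, so $X_Q$ and its combinatorics are unchanged. The same reasoning covers any full-rank matrix $\Gamma$ with $k$ rows satisfying $\Gamma M=0$ (so $\Gamma$ is $k\times m$): then $\ker\Gamma\supseteq\col(M)$, and comparing dimensions, $\dim\ker\Gamma=m-k=n-1=\dim\col(M)$, forces $\ker\Gamma=\col(M)$, whence $\Gamma=U M_G$ and $\Gamma$ yields the same fan. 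I expect the only genuine difficulty to be the step importing the ray description from \cite{S}, namely verifying that the quotient fan with rays $\pi(e_x)$ really computes $\Proj\Cb[X^{\alpha}\mid w(X^{\alpha})=\theta(Q_0)]$ from Definition \ref{moduli space}; the surrounding linear algebra, including the rank computation that relies on connectedness of $Q$, is routine.
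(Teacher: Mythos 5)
The paper does not actually prove this proposition; it is imported wholesale from \cite{S}, so there is nothing internal to compare you against. Your route is the standard one and almost certainly the content of that reference: present $X_Q$ as a toric quotient of $\Cb^{m}$ by $P\Gc$, read off the cocharacter sequence $0\to \Ima(M)\to\Zb^{m}\to N\to 0$, and observe that $M_G$ is a matrix for the projection $\pi$ because its rows are the dual-basis vectors annihilating $\col(M)$. Your dimension count (connectedness giving $\mathrm{rank}(M)=n-1$, hence $k=m-n+1$), the identification $\rho_x=\pi(e_x)=M_Ge_x$, and the derivation of $M_G\cdot M=0$ are all correct, and you are right to silently fix the paper's typo that $\Gamma$ must be $k\times m$, not $k\times n$. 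You are also right that the only non-routine input is the assertion that this quotient fan computes the $\Proj$ of Definition \ref{moduli space}; like the paper, you defer that to \cite{S}, which is fair.

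The one genuine gap is in your final paragraph. From $\Gamma M=0$ and full rank you get $\Gamma=UM_G$ with $U\in GL_k(\Qb)$, but a $GL_k(\Qb)$ change of coordinates does \emph{not} in general carry a fan in $(\Qb^k,\Zb^k)$ to an isomorphic toric variety: toric data is a fan together with a lattice, and only $GL_k(\Zb)$ preserves $\Zb^k$. For instance $U=\left(\begin{smallmatrix}1&0\\ 1/2&1/2\end{smallmatrix}\right)$ sends the smooth cone $\mathrm{cone}(e_1,e_2)$ to $\mathrm{cone}((2,1),(0,1))$, which is singular for the lattice $\Zb^2$; so ``any $\Gamma$ with $\Gamma M=0$ works'' is false if one insists on the ambient lattice $\Zb^k$. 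The correct statement is that the intrinsic lattice is $N=\Zb^{m}/\Ima(M)$ with rays $\pi(e_x)$ (this quotient is torsion-free because the incidence matrix of a graph is totally unimodular, a point worth recording), and a matrix $\Gamma$ ``works'' only when taken together with the image lattice $\Gamma(\Zb^{m})$, equivalently when $\Gamma$ is integral with $\Gamma(\Zb^{m})=\Zb^{k}$, i.e.\ $U\in GL_k(\Zb)$. This is not a pedantic point for this paper: the smoothness criterion used in Section 5 (ray generators forming a $\Zb$-basis) is exactly the lattice-dependent notion that an arbitrary rational $\Gamma$ can destroy. You should either restrict the class of admissible $\Gamma$ or phrase everything relative to $N$.
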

We define dot product of two rays $\rho_x.\rho_y$ to be dot product of their corresponding columns in $M_G$. This is used later in Proposition \ref{90degree} when we talk about the position of a ray compared to another ray.
\end{definition}
\begin{proposition}\label{vertex degree vs rays}
Let $Q$ be a quiver with no directed cycle and let $M_G$ be a Gale matrix. Consider a vertex $q$ and its out-going  and in-coming arrows $x_1, \ldots, x_i$ and $y_1, \ldots, y_j$ respectively. If we show the corresponding columns in $M_G$ by $\mathcal{X}_1, \ldots, \mathcal{X}_i$ and $\mathcal{Y}_1, \ldots, \mathcal{Y}_j$, then $\mathcal{X}_1+\cdots + \mathcal{X}_i=\mathcal{Y}_1+\cdots +\mathcal{Y}_j$.
\end{proposition}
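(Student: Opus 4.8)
The plan is to derive the identity directly from the single relation $M_G\cdot M=0$, which is recorded in the proposition preceding the definition of the dot product of rays and which holds for every admissible Gale matrix. No geometry is needed: the statement is simply the content of one column of that matrix equation.

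First I would fix the bookkeeping. The matrix $M$ is $|Q_1|\times|Q_0|$, and the row attached to an arrow $x$ is $w(x)$, i.e. the vector carrying $-1$ in the source position, $+1$ in the target position, and $0$ elsewhere. Dually, $M_G$ has one column $\rho_x$ per arrow $x$ (these are the rays), so a column of $M_G$ is indexed by an arrow, whereas a column of $M$ is indexed by a vertex. The vectors $\mathcal{X}_1,\dots,\mathcal{X}_i$ and $\mathcal{Y}_1,\dots,\mathcal{Y}_j$ in the statement are exactly the columns of $M_G$ sitting over the outgoing arrows $x_1,\dots,x_i$ and the incoming arrows $y_1,\dots,y_j$ at the chosen vertex $q$.

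Next I would expand the $q$-th column of the product $M_G\cdot M$. For any row index $s$ of $M_G$,
\[
(M_G\cdot M)_{s,q}=\sum_{x\in Q_1}(M_G)_{s,x}\,w(x)_q .
\]
Since $w(x)_q=+1$ precisely when $q$ is the target of $x$ (that is, $x$ is incoming at $q$), equals $-1$ precisely when $q$ is the source of $x$ (that is, $x$ is outgoing at $q$), and vanishes otherwise, this sum collapses to $\sum_{l=1}^{j}(M_G)_{s,y_l}-\sum_{l=1}^{i}(M_G)_{s,x_l}$. The relation $M_G\cdot M=0$ forces this to vanish for every row $s$, giving $\sum_{l=1}^{j}(M_G)_{s,y_l}=\sum_{l=1}^{i}(M_G)_{s,x_l}$ for all $s$; collecting these scalar equalities across all rows $s$ is exactly the asserted vector identity $\mathcal{X}_1+\cdots+\mathcal{X}_i=\mathcal{Y}_1+\cdots+\mathcal{Y}_j$.

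There is essentially no hard step here; the only things to watch are the sign convention (ensuring incoming arrows carry the $+1$ and outgoing arrows the $-1$, as dictated by $w$) and the observation that, although $M_G$ is determined only up to a change of basis of its row space, the conclusion is insensitive to that choice because it uses nothing beyond $M_G\cdot M=0$. I would close by noting the clean reading of the result: the ray data respects the same source--target balance at each vertex that $w$ imposes, which is precisely what makes it usable in Proposition \ref{90degree}.
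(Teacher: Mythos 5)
Your proof is correct and follows essentially the same route as the paper's: the paper likewise takes the column $v_q$ of $M$ indexed by the vertex $q$ and reads the identity off from $M_G\cdot v_q=0$, which is a column of the relation $M_G\cdot M=0$. Your write-up just makes the sign bookkeeping (incoming $+1$, outgoing $-1$) explicit, which the paper leaves implicit.
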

\begin{proof}
Let $q$ be a vertex of $Q$ and let $x_1, \ldots, x_i$, $y_1, \ldots, y_j$, $\mathcal{X}_1, \ldots, \mathcal{X}_i$ and $\mathcal{Y}_1, \ldots, \mathcal{Y}_j$ be as above. Consider the column of $M$ corresponding to $q$ and show it by $v_q$. Then by the definition, we have $M_G.M=0$ and so $M_G.m_q=0$ which is what we are looking for.
\end{proof}
\end{section}
\begin{section}{Simple Quivers}\label{section4}
\begin{definition}
Let $v_i \xrightarrow{x}v_j$ be a single-arrow and let $d_i=d_j$. By contracting $x$ we mean removing $x$ and unifying $v_i$ and $v_j$ to a single vertex, call it $v_{ij}$. Then $d_{ij}:=d_i=d_j$ and $\theta_{ij}:=\theta_i+\theta_j$. Note that $\theta_{ij}$ satisfies the definition of a canonical weight at the vertex $v_{ij}$ and therefore the new dimension and weight  vectors are the identity dimension vector and the canonical weight. We show the new quiver with $Q_x$.
\end{definition}
\begin{definition}
A single-arrow $v_i \xrightarrow{x}v_j$    is called contractible if contracting $x$ causes no directed cycle. We call a quiver $Q$ simple if it is connected and has no contractible arrow. Obviously any connected quiver can be contracted to a simple quiver.
\end{definition}
\begin{lemma}
Let $x$ be a contractible arrow. There is a basis of cycles, $\Bc^{\tiny \Circlearrowright}_x$, such that any arrow is either same-directed with $x$ in all cycles of $\Bc^{\tiny \Circlearrowright}_x$ or opposite-directed in all of them.
\end{lemma}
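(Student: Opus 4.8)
The plan is to first restate the conclusion in a form that is easier to produce. Orienting each cycle of a prospective basis amounts to recording, for every arrow $y$, a sign telling whether $y$ is traversed along or against its own direction; the assertion that $y$ is \emph{same-directed with $x$ in all cycles} (or opposite in all of them) is exactly the statement that this sign can be chosen to depend only on $y$ and not on the cycle. Writing $\epsilon(y)\in\{+1,-1\}$ for that common sign, normalized by $\epsilon(x)=+1$, the Lemma is equivalent to the following: one may reverse the arrows with $\epsilon(y)=-1$, leaving $x$ untouched, to obtain a new quiver $\hat Q$ whose cycle space admits a basis consisting entirely of directed cycles of $\hat Q$. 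So the whole task is to reverse some arrows other than $x$ so that every cycle is generated by directed cycles, and then take those directed cycles as $\Bc^{\tiny \Circlearrowright}_x$.

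Next I would bring in contractibility. Since $x$ is contractible, $Q_x$ has no directed cycle, so a topological ordering of its vertices pulls back to a height $\tilde h\colon Q_0\to\Zb$ with $\tilde h(v_i)=\tilde h(v_j)$ and $\tilde h(\text{source})<\tilde h(\text{target})$ for every arrow other than $x$. This scaffolding does two things. First, it exhibits $x$ as the unique arrow joining two vertices of equal height, which is what lets me keep $x$ un-reversed while freely reorienting everything else. Second, it organizes the reversal blockwise: inside each $2$-edge-connected piece of the underlying graph I reverse arrows so the piece becomes strongly connected, running this as an ear decomposition begun at a cycle through $x$ (such a cycle exists whenever $x$ sits in a nontrivial block; if $x$ is a bridge it lies in no cycle and the condition on $x$ is vacuous). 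Orienting the initial cycle so that $x$ retains its direction and each subsequent ear as a directed path keeps $x$ fixed throughout.

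Finally I would assemble the basis. In $\hat Q$ every nontrivial block is strongly connected, so its directed cycles span the cycle space of that block; collecting a directed-cycle basis from each block yields a basis $\Bc^{\tiny \Circlearrowright}_x$ of the full cycle space in which each arrow is traversed in a single direction, namely the one prescribed by $\epsilon(y)$. Translating back, every arrow is same-directed with $x$ in all of these cycles (those with $\epsilon=+1$, including $x$ itself) or opposite in all of them (those with $\epsilon=-1$), which is the claim.

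The hard part is exactly this global consistency, and it is worth isolating why the obvious attempt fails. A fundamental-cycle basis attached to a spanning tree through $x$ need \emph{not} have the property: a single tree arrow can be forced into opposite signs by two chords that cross it in opposite directions, so no reorientation of the fundamental cycles makes all arrows agree. What rescues the statement is the freedom to choose the basis: the strong orientation of each block, anchored at $x$ via the height function from contraction, produces a basis of directed cycles for which the per-arrow signs cannot conflict by construction. I therefore expect the main obstacle to be verifying that this blockwise strong orientation can always be taken with $x$ oriented as given, and packaging the resulting directed cycles into an honest basis of the cycle space.
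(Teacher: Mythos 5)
Your argument is correct, but it takes a genuinely different route from the paper. The paper's proof is a local basis-exchange: it assumes an arrow $y$ is same-directed with $x$ in one basis cycle $C_1$ and opposite-directed in another, $C_2$, forms the two recombined cycles $C_3$ and $C_4$ from the four connecting paths, observes $C_2=C_3+C_4-C_1$, and swaps $C_2$ out of the basis for a cycle avoiding $y$ (treating each arrow of a multiple-arrow separately); it does not address termination of this process or verify that a basis is preserved. You instead solve the problem globally: a strong orientation of each nontrivial block (Robbins' theorem via an ear decomposition anchored at a cycle through $x$, so $x$ keeps its direction) realizes a single sign $\epsilon(y)$ per arrow, and the directed cycles of the reoriented quiver span the cycle space, giving the desired basis at once. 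Your approach buys pairwise consistency of \emph{all} arrows simultaneously, not just consistency relative to $x$, and the ear decomposition even hands you the \q{honest basis} you flag as remaining work: deleting one designated edge per ear leaves a spanning tree, and the one directed cycle closing each ear is triangular and unimodular with respect to those designated edges. Two remarks. First, your height function $\tilde h$ does no real work --- nothing in the ear-decomposition construction uses it, and indeed contractibility of $x$ is never needed: Robbins' theorem applies to any bridgeless block regardless of the given acyclic orientation, so you are proving the lemma for an arbitrary arrow $x$, which is legitimate since the statement holds in that generality (the contractibility hypothesis is only exploited later, e.g.\ in Proposition \ref{90degree} and Theorem \ref{X_Q/alpha}). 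Second, your diagnosis of why fundamental cycles of a spanning tree fail is exactly the difficulty the paper's exchange step is trying to repair, so your observation that the freedom to choose the basis is the essential point matches the paper's intent even though your mechanism for exercising that freedom is different.
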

\begin{proof}
Let there exist two cycles $C_1$ and $C_2$ containing $x$ and a single-arrow $y$ such that $y$ is same-directed with $x$ in $C_1$ and opposite directed in $C_2$.
\begin{figure}[h!]\begin{center}
  \includegraphics[scale=0.4]{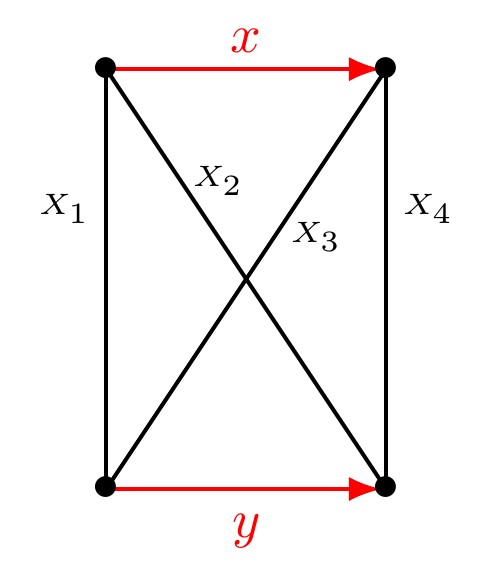}
  \label{fig:M2}
\end{center}
\end{figure}
Where each $X_i$ is a paths containing multiple-arrows and single-arrows. Then $C_1:x\pm X_3+y\pm X_2$ and $C_2:x\pm X_4-y \pm X_1$. We also have the cycles $C_3:x\pm X_3\pm X_1$ and $C_4:x\pm X_4\pm X_2$. Now we can replace $C_2$ by $C_3+C_4-C_1$. If $Y$ is a multiple-arrow of $Y=\{y_1, \ldots, y_n\}$, we do everything the same for $Y$, then in the basis we replace $Y$ by each arrow $y_i$ separately.
\end{proof}

\begin{lemma}\label{product of vertices}
\cite[Exercise II.5.11]{Ha} Let $S$ and $T$ be two graded rings with $S_0=T_0=\Cb$. Their Cartesian product $S\times_{\Cb}T$ is the graded ring $\bigoplus_{d\geq0}S_d\otimes_{\Cb}T_d$. Then $\Proj(S\times_{\Cb}T)\cong\Proj(S)\times_{\Cb}\Proj(T)$.
\end{lemma}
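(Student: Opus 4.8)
The plan is to prove the isomorphism locally on standard affine charts and then glue. Recall that for a graded ring $R$ with irrelevant ideal $R_+$, the scheme $\Proj(R)$ is covered by the affine opens $D_+(h)=\Spec(R_{(h)})$ as $h$ ranges over homogeneous elements of positive degree, where $R_{(h)}$ denotes the degree-zero part of the localization $R_h$. Write $R=S\times_\Cb T=\bigoplus_{d\geq 0}S_d\otimes_\Cb T_d$, and note that a simple tensor $a\otimes b$ is homogeneous in $R$ exactly when $a\in S_d$ and $b\in T_d$ share the same degree $d$, in which case $a\otimes b\in R_d$. The first point to check is that the charts $D_+(a\otimes b)$, with $a\in S_d$, $b\in T_d$ and $d\geq 1$, already cover $\Proj(R)$: if a homogeneous prime $\pf$ lay in no such chart, then $\pf$ would contain every simple tensor of every positive degree, and since such tensors span $S_d\otimes_\Cb T_d=R_d$ this would force $R_+\subseteq\pf$, contradicting $\pf\in\Proj(R)$. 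On the other side, the opens $D_+(a)\subseteq\Proj(S)$ and $D_+(b)\subseteq\Proj(T)$ give $D_+(a)\times_\Cb D_+(b)=\Spec\bigl(S_{(a)}\otimes_\Cb T_{(b)}\bigr)$, and these cover $\Proj(S)\times_\Cb\Proj(T)$.

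The heart of the argument is then a single ring isomorphism,
\[
R_{(a\otimes b)}\;\cong\;S_{(a)}\otimes_\Cb T_{(b)},
\]
for $a\in S_d$, $b\in T_d$. I would define the forward map by sending $\tfrac{c}{(a\otimes b)^k}$, with $c=\sum_i p_i\otimes q_i\in R_{dk}=S_{dk}\otimes_\Cb T_{dk}$, to $\sum_i \tfrac{p_i}{a^k}\otimes\tfrac{q_i}{b^k}$. Well-definedness holds because for fixed $k$ the assignment $(p,q)\mapsto\tfrac{p}{a^k}\otimes\tfrac{q}{b^k}$ is $\Cb$-bilinear and so factors through $S_{dk}\otimes_\Cb T_{dk}$, while compatibility with the relation $\tfrac{c}{(a\otimes b)^k}=\tfrac{c\,(a\otimes b)}{(a\otimes b)^{k+1}}$ is immediate. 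For the inverse I would send a simple tensor $\tfrac{p}{a^k}\otimes\tfrac{q}{b^l}$ (with $p\in S_{dk}$, $q\in T_{dl}$) to $\tfrac{(p\,a^{l})\otimes(q\,b^{k})}{(a\otimes b)^{k+l}}$, after clearing to the common power $k+l$; one checks this is $\Cb$-bilinear, independent of the chosen representatives, and that the two maps are mutually inverse ring homomorphisms.

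Finally I would glue. Over a double overlap $D_+(a\otimes b)\cap D_+(a'\otimes b')$ the two chart isomorphisms arise from the same factorwise identification followed by a further localization, so functoriality of localization forces them to agree; hence the local isomorphisms patch to a global isomorphism $\Proj(R)\cong\Proj(S)\times_\Cb\Proj(T)$. I expect the main obstacle to be the verification of the chart isomorphism itself — specifically, producing a well-defined inverse and confirming bijectivity, since a general element of $R_{(a\otimes b)}$ is a \emph{sum} of simple tensors divided by a power of $a\otimes b$, and one must track degrees carefully through the tensor product; the covering and gluing steps are comparatively routine. A slicker but more abstract alternative would be to compare functors of points, identifying a morphism from a test scheme $Z$ into either side with a line bundle on $Z$ equipped with suitable generating global sections pulled back from $S$ and $T$; I would fall back on this only if the explicit gluing became unwieldy.
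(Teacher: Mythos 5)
The paper does not prove this lemma at all: it is stated as a known result with a citation to Hartshorne, Exercise II.5.11, so there is no in-paper argument to compare against. Your chart-by-chart proof is the standard solution to that exercise and is correct in outline: the charts $D_+(a\otimes b)$ for simple tensors of positive degree cover $\Proj(S\times_\Cb T)$ because simple tensors span each graded piece, the ring isomorphism $R_{(a\otimes b)}\cong S_{(a)}\otimes_\Cb T_{(b)}$ is exactly the computation that makes the local identification work, and the gluing is routine. One small point worth making explicit: your affine charts on the Proj side use $a\in S_d$ and $b\in T_d$ of the \emph{same} degree, whereas a general point of $\Proj(S)\times_\Cb\Proj(T)$ a priori only lies in some $D_+(a)\times_\Cb D_+(b)$ with $\deg a=d\neq e=\deg b$; this is repaired by replacing $a$ with $a^e$ and $b$ with $b^d$, since $D_+(a)=D_+(a^e)$. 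With that observation added, the covering on both sides matches up and the argument is complete.
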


\begin{proposition}\label{X_Q_P}
Let $Q$ be a quiver with no directed cycle and let $P$ be a sub-quiver of $Q$ with adjusted weights. Consider the quiver $Q$ with all its vertices contained in $P$ contracted to one single vertex $p$ and call it by $\QP$. If $p$ is not contained in any proper cycle of $\QP$, then $X_Q$ is isomorphic to the product of $X_P$ and  $X_\QP$.
\end{proposition}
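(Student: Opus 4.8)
The plan is to identify the homogeneous coordinate ring of $X_Q$ with the Cartesian product of those of $X_P$ and $\QP$ and then apply Lemma \ref{product of vertices}. Write $S_Q$ for the ring in Definition \ref{moduli space} and $(S_Q)_d$ for its degree-$d$ component, so that $X_Q=\Proj S_Q$ and every monomial of $(S_Q)_d$ has weight $d\,\theta(Q_0)$; define $S_P$ and $S_\QP$ the same way, using the adjusted weights. The arrows split as $Q_1=P_1\sqcup E$ into the internal arrows of $P$ and the external arrows, and the external arrows are exactly the arrows of $\QP$. Summing the defining equation $w(X^\alpha)=d\theta$ over the vertices of $P$ makes every internal variable cancel (by Proposition \ref{vertex degree vs rays}), so the external part of an invariant monomial of $Q$ is an invariant monomial of $\QP$. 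Forgetting internal arrows therefore sends a cycle of $Q$ to a cycle of $\QP$; its kernel is the cycles supported on $P_1$, i.e. the cycles of $P$, and since $P$ is connected every cycle of $\QP$ lifts. This gives a short exact sequence $0\to Z(P)\to Z(Q)\to Z(\QP)\to 0$ of the free abelian groups of cycles, and I fix a linear splitting $s\colon Z(\QP)\to Z(Q)$.

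The key point is that the isomorphism $S_P\times_\Cb S_\QP\xrightarrow{\sim}S_Q$ is \emph{not} given by multiplying the two monomials: if $\mathbf 1_E$ denotes the monomial in which every external arrow has exponent $1$ (it has weight $\theta_\QP$), then for $z\in(S_\QP)_d$ the difference $z-d\cdot\mathbf 1_E$ is a cycle of $\QP$. I send $y\otimes z$ to the monomial whose external exponents are those of $z$ and whose internal exponents are those of $y$ shifted by the $P_1$-part of the lifted cycle $s(z-d\cdot\mathbf 1_E)$. A direct divergence count, using that a lifted cycle has total divergence zero at every vertex of $P$, shows this monomial has weight exactly $d\,\theta(Q_0)$; linearity of $s$ makes the assignment multiplicative, and reading off the external exponents recovers $z$ and then $y$, so the map is injective. (In the simplest cases, such as a pendant subquiver, the correction is trivial and the map is honest multiplication, but whenever an external vertex attaches to two vertices of $P$ the internal exponents genuinely get twisted by $s$.)

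The main obstacle is to show that this assignment lands among honest monomials and is surjective — equivalently that the shifted internal exponents, and the internal exponents recovered in the reverse direction, are all non-negative. This is exactly where the hypothesis that $p$ lies on no proper cycle of $\QP$ is used. Under that hypothesis a basis of $Z(\QP)$ may be chosen from cycles that avoid $p$ together with the bigons at $p$ coming from multiple arrows incident to $p$: the former lift with zero internal part, while the internal correction of a bigon is a single path in the connected quiver $P$, whose contribution is absorbed by the exponents available in $(S_P)_d$. Concretely I would phrase this as the assertion that $(y,z)\mapsto\bigl(y+s(z-\mathbf 1_E)^{\mathrm{int}},\,z\bigr)$ is a lattice isomorphism $\Delta_P\times\Delta_\QP\xrightarrow{\sim}\Delta_Q$ of the underlying flow polytopes, and then check that a proper cycle through $p$ is precisely what would force a negative internal exponent that $\Delta_P$ cannot compensate. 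Granting this, the constructed map is a graded ring isomorphism, and Lemma \ref{product of vertices} yields $X_Q=\Proj S_Q\cong\Proj(S_P\times_\Cb S_\QP)\cong X_P\times X_\QP$.
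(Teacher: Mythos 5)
Your overall strategy is the same as the paper's: identify the graded coordinate ring of $X_Q$ with the Cartesian product of those of $X_P$ and $X_\QP$ and invoke Lemma \ref{product of vertices}. Moreover, you have put your finger on a real difficulty that the paper's own proof passes over: when two external arrows attached to \emph{different} vertices of $P$ become parallel arrows at $p$, the restriction of an invariant monomial of $Q$ to the arrows of $P$ need not be an invariant monomial of $P$, so the naive multiplication map $y\otimes z\mapsto yz$ is not the required isomorphism. (The paper asserts exactly this splitting; its argument only controls the \emph{total} exponent of each bunch at $p$, not how that exponent is distributed among arrows with different endpoints in $P$.) So your diagnosis of where the work lies is better than the paper's.

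However, your twisted map does not close the gap, and no argument can: the statement as written fails in precisely the situation you isolate, namely at the step ``the internal correction of a bigon \dots is absorbed by the exponents available in $(S_P)_d$.'' Take $P$ to be the diamond $u\to v_1\to w$, $u\to v_2\to w$ (arrows $a_1,b_1,a_2,b_2$) and attach a single external vertex $q$ by $x_1\colon v_1\to q$ and $x_2\colon v_2\to q$. Then $Q$ has no directed cycle, the only cycle of $\QP$ through $p$ is the bigon $\{x_1,x_2\}$, which is not proper, so the hypothesis holds, and $X_P\cong X_\QP\cong\Pb^1$. But the degree-one invariant monomials of $Q$ are parametrized by $(\alpha_{b_1},\alpha_{x_1})\in\{0,1,2\}^2$ with $\alpha_{a_1}=\alpha_{b_1}+\alpha_{x_1}-1\ge 0$ and $\le 2$ (the remaining exponents being complementary), i.e.\ by the seven lattice points of a hexagon, whereas $\dim(S_P)_1\cdot\dim(S_\QP)_1=9$; the resulting surface is the degree-six del Pezzo surface, of Picard rank $4$, not $\Pb^1\times\Pb^1$. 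In your construction this shows up as follows: only two of the seven invariants of $Q$ have external part $x_1^2$, so whichever lift of the bigon you fix, at least one of the three pairs $(y,x_1^2)$ with $y\in(S_P)_1$ is sent to a vector with a negative internal exponent, which $\Delta_P$ cannot compensate. Both your proof and the paper's become correct under the stronger hypothesis that every cycle of $\QP$ through $p$ --- bigons included --- lifts to a cycle of $Q$ containing no arrow of $P$ (for instance, if $p$ lies on no cycle of $\QP$ at all, or if parallel external arrows at $p$ always share their endpoint in $P$): then a basis of cycles of $\QP$ lifts with zero internal part, your correction term vanishes, and honest multiplication already gives the isomorphism. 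That weaker statement appears to be all that the later applications in Theorem \ref{X_Q/alpha} and Proposition \ref{cycle singular} actually require.
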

\begin{proof}
Arrows of $Q$ starting at a vertex of $P$ and ending at a vertex not in $P$ are transformed to arrows in $Q_{_P}$ starting at $p$. Similarly, arrows of $Q$ ending at a vertex of $P$. Let $\{X_1,\ldots, X_k\}$ be the set of  arrows starting/ending at $p$ and let $m_1, \ldots, m_k$ be their multiplicities. If $X^{\alpha}$ is an invariant monomial of $Q$, we want to show that the degree of each $X_i$ in $X^{\alpha}$ is exactly $m_i$. To do so, without loose of generality we show it for only $i=1$. We also assume that $p$ is the source of $X_1$. Let $p$ be connected to a quiver $Q_1$ and let $\alpha_1$ be the degree of $X_1$ in $X^{\alpha}$. If $Q_1$ is one single vertex $q$, then the weight of $q$ is $m_1$ and so $\alpha_1=m_1$. Let $Q_1$ have more than one vertex and connected to $p$ by the vertex $q$.

In $X^{\alpha}$, we change the degrees of all variables equal to zero except the ones corresponding to arrows in $Q_1$ and call it $X^{\alpha^{'}}$. On the other hand, consider $X^{\beta}$ where the degree of each variable is one and in a similar way find $X^{\beta^{'}}$. Since both $X^{\alpha^{'}}$ and $X^{\beta^{'}}$ are representations of $Q_1$, the weight adjustment should be the same in both of them. So, $\alpha_1=m_1$.

Now let $X^{\alpha}$ be an invariant monomial of $Q$. If we change the degrees of all variables corresponding to arrows of $P$ equal to zero and show it by $X^{\beta}$, then $X^{\beta}$ is an invariant monomial of $Q_{_P}$ and if we change the degrees of all variables corresponding to arrows of $Q\backslash P$ equal to zero and show it by $X^{\beta'}$, then $X^{\beta'}$ is an invariant monomial of $P$ and $X^{\alpha}=X^{\beta}\otimes X^{\beta'}$. On the other hand, let $X^{\beta}$ and $X^{\beta'}$ be invariant monomials of $Q_{_P}$ and $P$ respectively, then $X^{\beta}\otimes X^{\beta' }$ is an invariant monomial of $Q$. Thus by Lemma  \ref{product of vertices} $\Cb[R_Q]$ is the Cartesian product $\Cb[R_P]\times_{\Cb} \Cb[R_{Q_{_P}}]$ and 
\[
X_Q=\Proj(\Cb[R_Q])\cong\Proj(\Cb[R_P]\times_{\Cb} \Cb[R_{Q_{_P}}])=X_\QP\times X_P.
\]
\end{proof}

\begin{proposition}\label{90degree}
Let $x$ be a contractible arrow in a quiver $Q$. If there is an arrow $y$ in a cycle with the same direction as $x$, then the dot product $\rho_{x}.\rho_{y}$ is non-negative.
\end{proposition}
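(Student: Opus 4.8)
The plan is to compute the dot product directly from a carefully chosen Gale matrix. Since $\rho_x\cdot\rho_y$ is defined through the columns of a matrix $M_G$ with $M_G\cdot M=0$, and the rows of any such matrix form a basis of the cycle space of $Q$, I would fix $M_G$ to be the matrix whose rows are exactly the cycles of a basis $\Bc^{\tiny \Circlearrowright}_x$. This is legitimate: the incidence vector of any cycle satisfies the conservation relations $M^{T}c=0$ at every vertex (in-flow equals out-flow), so it is orthogonal to every column of $M$, whence $M_G\cdot M=0$ exactly as in the proof of Proposition \ref{vertex degree vs rays}. Because $x$ is contractible, the preceding lemma supplies such a basis in which every arrow is oriented consistently relative to $x$: same-directed in all basis cycles containing it, or opposite-directed in all of them. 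Using the hypothesis that $y$ is same-directed with $x$ in some cycle, I would choose $\Bc^{\tiny \Circlearrowright}_x$ so that this orientation is retained, so that $y$ is same-directed with $x$ throughout $\Bc^{\tiny \Circlearrowright}_x$.

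With this choice, write each basis cycle as $C_l:\sum_z c_l(z)\,z$, where $c_l(z)$ is the signed coefficient of the arrow $z$ in $C_l$, i.e.\ the entry of $M_G$ in row $C_l$ and the column of $z$. Then the definition of the dot product unwinds to
\[
\rho_x\cdot\rho_y=\sum_{C_l\in\Bc^{\tiny \Circlearrowright}_x} c_l(x)\,c_l(y).
\]
I would then show that every summand is non-negative. If $x$ or $y$ is absent from $C_l$, the corresponding coefficient is $0$ and the term vanishes. If both occur in $C_l$, then by adaptedness of the basis together with the hypothesis, $c_l(x)$ and $c_l(y)$ carry the same sign, so $c_l(x)c_l(y)>0$. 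Summing over $l$ yields $\rho_x\cdot\rho_y\geq 0$.

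The main obstacle is the bookkeeping around the basis, not the final inequality. First, the dot product is not intrinsic to the fan: it depends on the chosen $M_G$, so I must justify that the adapted basis is an admissible choice and, more delicately, that it can be taken compatible with the direction of $y$ singled out in the hypothesis (the same arrow can be same-directed with $x$ in one cycle and opposite in another, which is precisely why the adapted-basis lemma is needed). Second, I must check the sign argument survives multiplicities: when $x$ or $y$ lies in a multiple-arrow the relevant cycles may have coefficients of absolute value greater than $1$, but the argument only uses agreement of signs, which adaptedness still guarantees. Once these two points are settled, the term-by-term non-negativity closes the proof.
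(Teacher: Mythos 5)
Your proposal is correct and follows essentially the same route as the paper: both take the Gale matrix whose rows are the incidence vectors of the adapted basis $\Bc^{\tiny \Circlearrowright}_x$ from the preceding lemma, observe that $c_l(x)c_l(y)\geq 0$ term by term because the adapted basis forces consistent signs, and then handle multiple-arrows by splitting them into their constituent cycles. Your write-up is in fact slightly more careful than the paper's on two points the paper leaves implicit --- why the cycle matrix is an admissible choice of $M_G$, and why the adapted basis can be taken to retain the orientation of $y$ given in the hypothesis --- but the underlying argument is the same.
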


\begin{proof}
The number of independent cycles, say $k$, is the same as the dimension of the variety. Let also $m$ be the number of arrows. First we consider the case where $y$ is a single-arrows. We will define a $k\times m$ Gale matrix in which the dot product $\rho_{x}.\rho_{y}$ is non-negative. Per cycle in $\Bc^{\tiny \Circlearrowright}_x$ we write the row of the coefficients of all the arrows, considering that the coefficient of an arrow which is not in a cycle is zero. Assuming that an arrow $y$ is in a cycle with $x$, then their coefficients have always the same sign if not zero. This guarantees that  $\rho_x.\rho_y\geq 0$. Now if there is a multiple-arrow, $y=\{y_1, \ldots, y_l\}$, then $\Bc^{\tiny \Circlearrowright}_x$ has $l-1$ cycles more compared to the case that $y$ is a single-arrow. Namely, $y$ in each cycles  shall be replaced by any $y_j$. So, based on the discussion above, $\rho_x.\rho_{y_j}\geq0$.
\end{proof}

\begin{theorem}\label{X_Q/alpha}
Let $Q$ be a quiver with no directed cycle and a contractible arrow $x$. Then either $X_Q=X_{Q_x}$ or $X_{Q_{x}}$ is a blow down of $X_Q$.
\end{theorem}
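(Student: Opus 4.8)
The plan is to pass entirely to the fan picture of Section \ref{section3} and track what contracting $x$ does to the Gale data. Write $x:v_i\to v_j$ and let $M$ be the correspondent matrix of $Q$. Contracting $x$ deletes the row of $M$ indexed by $x$ and replaces the two vertex columns $v_i,v_j$ by their sum $v_{ij}$; call the result $M_x$, the correspondent matrix of $Q_x$. The first step is to check that if $M_G$ is a Gale matrix of $Q$ then deleting the single column $\rho_x$ produces a Gale matrix of $Q_x$. This follows at once from $M_G\cdot M=0$ together with Proposition \ref{vertex degree vs rays}: the deleted row $w(x)$ vanishes outside positions $i,j$, and the merged column satisfies $M_G\cdot(v_i+v_j)=0$ since $M_G\cdot v_i=M_G\cdot v_j=0$. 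Because contracting a single-arrow preserves the first Betti number of the underlying graph, $\dim X_{Q_x}=\dim X_Q$, so both fans live in the same lattice and the rays of $X_{Q_x}$ are exactly the rays $\{\rho_y\}_{y\neq x}$ of $X_Q$, the very same vectors, with only $\rho_x$ removed.

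Having reduced the theorem to the statement that removing $\rho_x$ carries the fan of $X_Q$ to that of $X_{Q_x}$, the second step is to locate $\rho_x$ relative to the surviving rays. Here I would build the Gale matrix from the cycle basis $\Bc^{\tiny \Circlearrowright}_x$ provided by the contractibility lemma of Section \ref{section4} (in which every arrow is consistently same- or opposite-directed with $x$), oriented so that $x$ has coefficient $+1$ in each basis cycle that contains it; then $\rho_x$ is the sum of the coordinate vectors attached to those cycles. Combining the relation among the columns coming from Proposition \ref{vertex degree vs rays} at the two endpoints of $x$ with the sign-coherence of $\Bc^{\tiny \Circlearrowright}_x$ and the non-negativity $\rho_x\cdot\rho_y\ge0$ of Proposition \ref{90degree}, I expect to obtain $\rho_x=\sum_{y\in T}\rho_y$ as a positive sum over a set $T$ of same-directed arrows whose rays generate a single cone $\sigma$ of the fan of $X_{Q_x}$, with $\rho_x$ lying in the relative interior of $\sigma$ and primitive (because $x$ is a single-arrow).

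The final step is to read off the geometry through the standard toric dictionary. The relation $\rho_x=\sum_{y\in T}\rho_y$ says that the fan of $X_Q$ is the star subdivision of the fan of $X_{Q_x}$ along $\sigma$ with new ray $\rho_x$, so the induced proper birational toric morphism $X_Q\to X_{Q_x}$ is the blow-up of $X_{Q_x}$ along the orbit closure $V(\sigma)$. This produces the dichotomy: if $\rho_x$ is not a genuinely new ray of the fan, either because $\rho_x=0$ (when $x$ is a bridge) or because $\rho_x$ is a positive multiple of some surviving $\rho_y$, then the fan is unchanged and $X_Q=X_{Q_x}$; otherwise $\dim\sigma\ge2$, the subdivision is nontrivial, and $X_{Q_x}$ is a genuine blow-down of $X_Q$.

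The main obstacle I anticipate is the cone-level content of the second step. It is not enough to show that $\rho_x$ is the barycentric ray of $\sigma$; one must also verify that the maximal cones of the $Q$-fan passing through $\rho_x$ are precisely the cones of the star subdivision of $\sigma$ in the $Q_x$-fan, so that no other refinement is introduced and the morphism is exactly the blow-up rather than some coarser modification. This requires a combinatorial description, read directly off the quiver, of which collections of rays span cones of a toric quiver variety, and it is exactly the sign-coherence built into $\Bc^{\tiny \Circlearrowright}_x$ that I expect to make this bookkeeping tractable.
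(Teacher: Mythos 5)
Your proposal follows essentially the same route as the paper's own proof: pass to the Gale/fan picture, show that contracting $x$ amounts to deleting the column $\rho_x$ from the Gale matrix while leaving the other rays unchanged, express $\rho_x$ as a sum of rays of same-directed arrows via Proposition \ref{vertex degree vs rays} together with the sign-coherent cycle basis and Proposition \ref{90degree}, and conclude via star subdivision; the only cosmetic difference is that you handle the trivial case through $\rho_x$ being zero or redundant, where the paper invokes the product decomposition of Proposition \ref{X_Q_P}. The cone-level verification you flag as the main obstacle is also left implicit in the paper, which passes directly to the star-subdivision definition of \cite[Definition 3.3.17]{CLS} without spelling out which collections of rays span cones.
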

\begin{proof}
If $x$ is not involved in any cycle, then $X_Q=X_{Q_x}$ by Proposition \ref{X_Q_P}. So we assume that $x$ is in a cycle. We first show that contracting $x$ is the same as removing a ray in the fan representation of the quiver variety. Let $Q$ be a quiver and let $M$ be the correspondent matrix. Let us contract a contractible arrow $v_i\xrightarrow{x}v_j$. Then in the matrix $M$, two columns of $M$, corresponding to $v_i$ and $v_j$, are added together and written as one column. Also the row corresponding to $x$, say the $l$-th row and show by $m_x$, will be entirely zero and removed. In the new correspondent matrix, show it by $M_x$, we write the new column as $v_{ij}$.

\begin{figure}[h!]\begin{center}
  \includegraphics[scale=0.6]{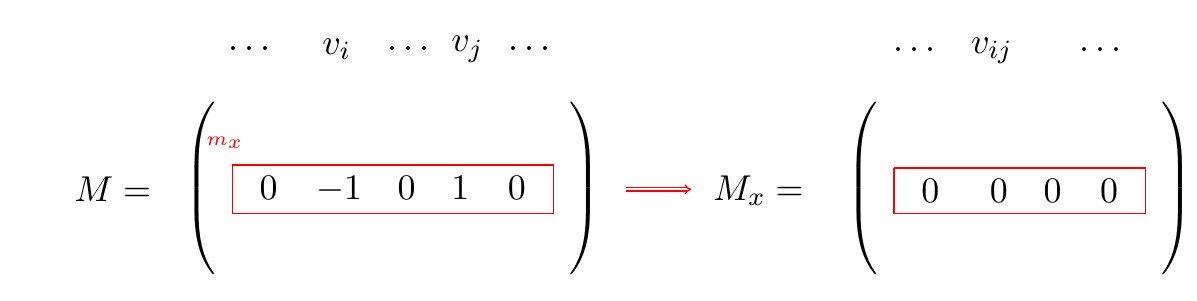}
  \label{fig:M2}
\end{center}
\end{figure}

Now we show that in $M_G$ all columns are the same as $(M_x)_G$ except the $l$-th column which is removed. First, note that in each row of a correspondent matrix, there is a $'-1'$ and a $'1'$ and adding all columns of $M$ (or $M_x$) gives a zero column. So at least one of the columns (any of them) is in the column space of the others. For $M$ we make sure that exactly one of $v_i$ or $v_j$ is in the basis $\Bc$, say $v_i$ and we set it to be $m_{i_1}$. Also, $c_1, \ldots, c_k$ can be chosen such that they all have zeros in the $l$-th component. Therefore, $[E_k]_{\Bc}$ has a non-zero weight for $v_i$ only if $k=l$. Namely, for $k\neq l$, $E_k$ is represented as a linear combination of columns except $v_i$. So, disregarding $v_{ij}$ for finding $(M_x)_G$ ensures that all columns of $M_G$ remains the same except the $l$-th column which is removed.\\
Now we only need to show that the ray corresponding to the removed ray is the sum of one or more rays corresponding to some other rays. For the rays corresponding to arrows we use the same notation as used for the arrows. Since $x$ is in a cycle we assume there are some arrows connected to $x$ at the target of $x$, $v_j$. Since $x$ is contractible , in any cycle that $x$ is involved, there is one or more arrow(s) in the direction of $x$. Then by Proposition \ref{vertex degree vs rays} the ray $x$ is the sum of the rays corresponding to some of these same-directed arrows. Using Proposition \ref{90degree} we can choose rays which are neighbours to $x$. Thus by \cite[Definition 3.3.17]{CLS} we get a blowdown.
\end{proof}

\end{section}

\begin{section}{Smooth Toric Quiver Varieties}\label{section5}

\begin{proposition}\label{cycle singular}
Let $Q$ be a simple quiver with no directed cycle, the identity dimension vector and the canonical weight. Then if $Q$ has a (non-directed) cycle, $X_Q$ is singular.
\end{proposition}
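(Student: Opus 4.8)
The plan is to detect the singularity in the fan that Section \ref{section3} attaches to $Q$. Since a smooth complete toric variety has all of its maximal cones simplicial, it suffices to exhibit one maximal cone of the fan of $X_Q$ that is \emph{not} simplicial: such a cone already shows $X_Q$ is not $\Qb$-factorial, hence singular. (In fact simpliciality is the only obstruction here — if $M_G$ is chosen to be the fundamental-cycle matrix of a spanning tree of $Q$ it is totally unimodular, so every simplicial maximal cone is automatically unimodular — but for the present statement only the easy direction ``non-simplicial $\Rightarrow$ singular'' is needed.) We may also assume the cycle $C$ is proper: a cycle made of a single repeated (multiple) arrow contributes only a projective-space direction and is smooth, so the substance lies in the proper case.

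Next I would translate non-simpliciality into the quiver. The maximal cones of the fan are dual to the vertices of the moment polytope $P_\theta=\{\alpha\in\Rb^{Q_1}_{\ge0}:\ \alpha\text{ has divergence }\theta\}$, where the divergence of $\alpha$ at a vertex is its in-flow minus its out-flow; the lattice points of $P_\theta$ are exactly the invariant monomials of Definition \ref{moduli space}. A vertex of $P_\theta$ is precisely a flow supported on a spanning forest (isolated vertices allowed) each of whose components has vanishing total $\theta$-weight, and the vertex is simple exactly when that forest is a spanning tree. Thus the goal becomes purely combinatorial: produce a feasible flow on $Q$ whose support is a spanning forest with at least two components. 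Equivalently, I would find a partition $Q_0=A\sqcup B$ into nonempty parts with $\sum_{v\in A}\theta(v)=\sum_{v\in B}\theta(v)=0$ in which $\theta$ is realized separately inside $A$ and inside $B$ by a nonnegative flow, all arrows across the cut carrying flow $0$. Such a forest vertex lies on strictly more than $k=\dim X_Q$ facets, giving the non-simplicial cone we want.

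The construction of this partition is where the hypotheses enter. Simplicity gives a clean structural fact: if a vertex $v$ has in-degree $1$, the only directed way to reach $v$ is through its unique incoming arrow, so contracting that arrow cannot create a directed cycle and the arrow is contractible; hence in a simple quiver every non-source vertex has in-degree $\ge2$ and, dually, every non-sink vertex has out-degree $\ge2$. A proper cycle $C$ must therefore pass through vertices that are neither sources nor sinks, at which Proposition \ref{vertex degree vs rays} yields a relation $\sum_{\text{in}}\rho_x=\sum_{\text{out}}\rho_y$ with at least two rays on each side. Fixing a local source $p$ and an adjacent local sink $q$ of $C$ joined by one arc $\pi$, I would push all of the net flow through the complementary connection between $p$ and $q$ and leave $\pi$ empty. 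In the model lens configuration — one source, one sink, two internally disjoint directed paths, with the multiple-arrow structure forced by simplicity — the interior vertices of $\pi$ are balanced by the canonical weight and split off as separate zero-weight components, so the support degenerates to a disconnected forest; the resulting cone is the cone over a quadrilateral, the prototypical singular cone, matching the $\Proj\Cb[\,\cdots]$ computation for the smallest such quiver.

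The main obstacle is passing from this model picture to an arbitrary proper cycle in an arbitrary simple quiver. Two features complicate matters: the cycle may alternate between several local sources and sinks, so there is no single arc to route the whole flow through; and simplicity forces extra multiple arrows elsewhere in $Q$, which may cross the cut $A\sqcup B$ unevenly and destroy the balance $\sum_A\theta(v)=0$ that the canonical weight guarantees along the cycle alone. I expect to handle this by first stripping away the part of $Q$ not attached to $C$ (splitting off product factors in the spirit of Proposition \ref{X_Q_P}) and then reducing $C$ to a minimal sub-lens between a single source–sink pair, where the weight balance is forced and the quadrilateral cone reappears. The delicate point — and the step I would spend the most care on — is verifying that this reduced non-simplicial cone genuinely occurs as a maximal cone of the fan of the original $X_Q$, rather than being an artifact of a simplicial refinement.
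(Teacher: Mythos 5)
Your reduction of the problem is correct and genuinely different from (and cleaner than) the paper's argument: the fan of $X_Q$ is the normal fan of the flow polytope $P_\theta$, its maximal cones correspond to the vertices of $P_\theta$, i.e.\ to feasible flows supported on forests, a vertex is simple exactly when that forest is a spanning tree, and, as you note, total unimodularity of the network matrix makes every simplicial maximal cone unimodular, so $X_Q$ is singular if and only if some vertex of $P_\theta$ is supported on a forest with at least two components. The gap is that the one step carrying all the content --- producing such a disconnected-forest vertex for an arbitrary simple quiver with a cycle --- is never carried out beyond your ``model lens,'' and it cannot be carried out in general, because such a vertex need not exist. Take $Q_0=\{a,b,c\}$ with bunches $a\xrightarrow{\ \times 3\ }c$, $b\xrightarrow{\ \times 2\ }c$, $b\xrightarrow{\ \times 2\ }a$. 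This quiver is connected, has no single-arrow (hence no contractible arrow, hence is simple), has no directed cycle, and contains a proper non-directed cycle. Its canonical weights are $(-1,-4,5)$; no proper nonempty subset of these sums to $0$, so your partition $A\sqcup B$ does not exist. Writing $s,t,u$ for the aggregate flows on the three bunches, the feasible flows satisfy $s=u+1$, $t=4-u$, $u\in[0,4]$, so every vertex of $P_\theta$ is supported on a spanning tree $\{s_i,t_j\}$ (with $s_i=1$, $t_j=4$) or $\{s_i,u_k\}$ (with $s_i=5$, $u_k=4$); all $12$ maximal cones are simplicial, and writing the rays in a fundamental-circuit basis each has determinant $\pm1$. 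Hence $X_Q$ is smooth (it is a nontrivial $\Pb^3$-bundle over $\Pb^2$), and by your own --- correct --- criterion the proposition fails for this $Q$.

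So the problem is not merely that you stopped at the model case: an honest completion of your approach refutes the statement rather than proving it. The paper's proof takes an entirely different route --- it factors a common monomial $X'$ out of all invariant monomials, passes to a reduced quiver by a Veronese identification, and then claims a Segre isomorphism back to $X_Q$ --- and the discrepancy sits precisely in that last step: when the factored monomial involves a multiple-arrow, ``multiplying the factor back in'' is a many-to-one correspondence, the degree-one graded pieces have different dimensions, and the two flow polytopes are not even combinatorially equivalent (on the example above, $12$ simple vertices before factoring versus $8$ vertices, two of them non-simple, after). Before refining either argument you should test the statement itself on the three-vertex quiver above; as it stands your fan-theoretic criterion is the reliable one, and what it shows is that simplicity together with the existence of a non-directed cycle does not force a non-simple vertex of $P_\theta$.
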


Before proving the lemma, let us work on the following example.

\begin{example}
Consider the following cycle. 
\begin{figure}[h!]
\begin{center}
  \includegraphics[scale=0.3]{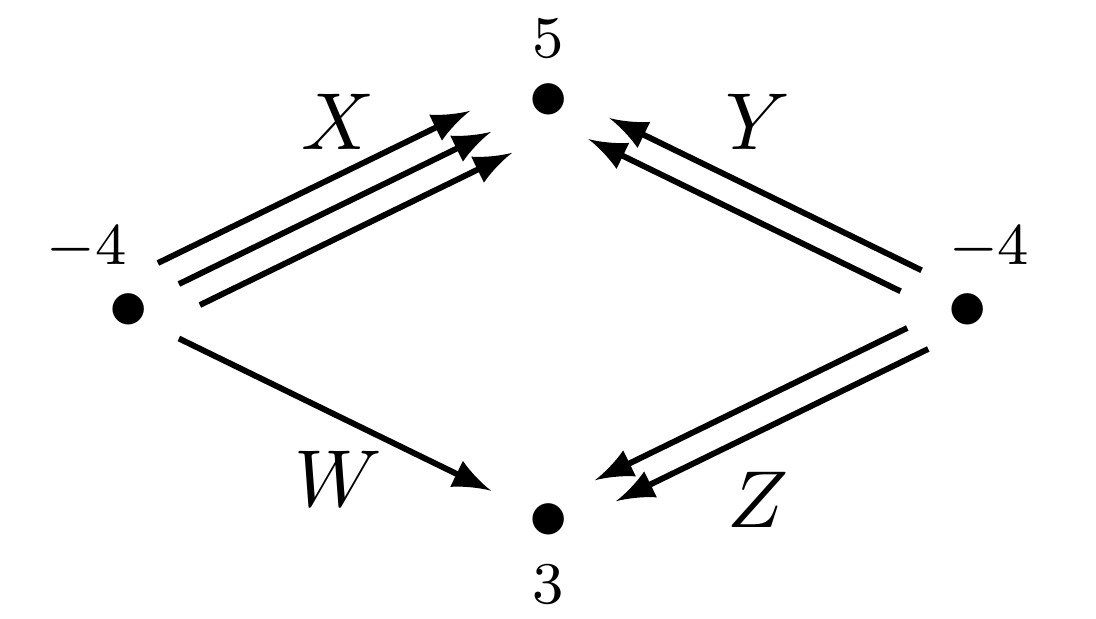}
  \label{fig:M1}
\end{center}
\end{figure}

Let $M=X^kY^lZ^mW^n$ be an invariant of the quiver where any of the  capital letters in $M$, like $X$, corresponds to each of $x_1, x_2$, or $x_3$. Here $X$ and $Z$ are in the same direction and $Y$ and $W$ in the same opposite direction and adding the power of each of the variables, results in increasing the power of the other variable in the same direction and decreasing the power of the variables in the opposite direction. Since the power of non of the variables can be negative, the power of $X$ and $Y$ are both at least $1$. So, all invariants of the quiver are
\[
X^4YZ^3, X^3Y^2Z^2W, X^2Y^3ZW^2, XY^4W^3
\]

Note that they are all in the kernel of the quiver matrix
\[
\begin{bmatrix}
-1&0&0&-1\\
1&1&0&0\\
0&-1&-1&0\\
0&0&1&1
\end{bmatrix}.
\]

Since $XY$ is the common factor of all invariants, we factor $XY$ and we have $(XZ)^3, (XZ)^2YW, XZ(YW)^2, (YW)^3$ which correspond to the quiver 

\begin{figure}[h!]\begin{center}
  \includegraphics[scale=0.3]{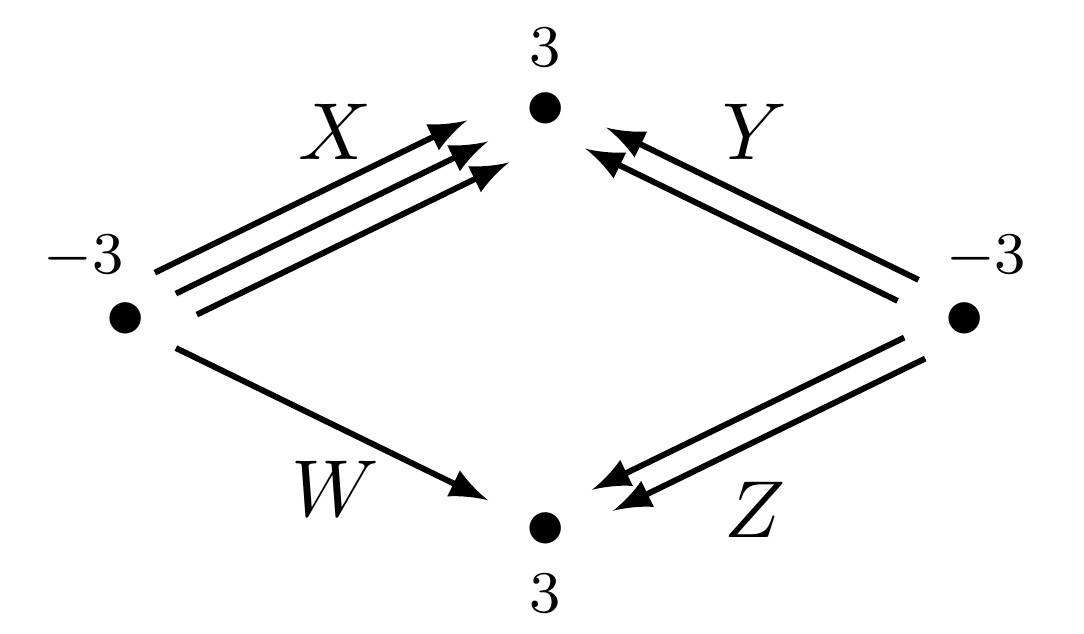}
  \label{fig:M2}
\end{center}
\end{figure}

Dividing all the weights with $3$, we only get the invariants $XZ, YW$. Then the corresponding variety is
\[
X=\Proj\left(\frac{\Cb[x_1z_1: x_1z_2: x_2z_1: x_2z_2: x_3z_1: x_3z_2: y_1w: y_2w]}{\text{corresponding relations}}\right)
\]
which is singular. Now, going back to the quiver in \ref{fig:M2}, we get the Veronese map $\nu_3:\Pb^7\rightarrow\Pb^{119}$ and  Veronese isomorphism $X\stackrel{\nu_3}{\cong}\nu_3(X)=X'$, so $X'$  is singular. Now, putting the $XY$ factor back in the invariants, we get the Segre map $\sigma:\Pb^{119}\times\Pb^5\rightarrow\Pb^{719}$ and Segre isomorphism $X'\stackrel{\sigma}{\cong}\sigma(X')=X_Q$, where $\Pb^5$ comes from $\Cb[x_1y_1:x_2y_1:x_3y_1:x_1y_2:x_2y_2:x_3y_2]$ for the $XY$ factor. So $X_Q$ is singular.
\end{example}

\begin{proof}[Proof of Proposition \ref{cycle singular}]
By Proposition \ref{X_Q_P}, we only need to work on a set of edge connected cycles. So, consider a cycle $Q_C$. Since it is not an oriented cycle, it has arrows in both directions, clockwise and anti-clockwise. We name all clockwise arrows with $\{X_i\}_{0\leq i\leq n}$ and anti-clockwise ones with $\{Y_j\}_{0\leq j\leq m}$. Let $\alpha_i$ and  $\beta_j$ show the number of arrows in the multi-arrows $X_i$ and $Y_j$ respectively. Also we show the arrows of $X_i$ and $Y_j$ by $x_{i, t_i}$ and $y_{j, s_j}$. Without loss of generality, assume that $\alpha_1$ and $\beta_1$ are the smallest numbers among $\alpha_i$ and $\beta_j$ respectively. Now let $X=X_1^{k_1}\ldots X_n^{k_n}.Y_1^{l_1}\ldots Y_m^{l_m}$ be an invariant of the cycle. Then for each $0\leq i\leq n$ and $0\leq j\leq m$, $k_i\geq \alpha'_i:=\alpha_i-\alpha_1$ and $l_j\geq\beta'_j:=\beta_j-\beta_1$. So, from all invariants we can factor $X':=X_1^{\alpha'_1}\ldots X_n^{\alpha'_n}.Y_1^{\beta'_1}\ldots Y_m^{\beta'_m}$  and  we adjust the weights of the vertices. Therefore in the first step, we will get a quiver with the property that the weight of each vertex is $\omega:=\alpha_1+\beta_1$, $-\omega$, or $0$ if it is a source, a sink, or neither respectively. In the second step, dividing all the weights by $\omega$, we get the invariants $X_1\ldots X_n$ and $Y_1\ldots Y_n$. Expanding them we get the following variety
\[
\Proj\left(\frac{\Cb[x_{1, t_1}\ldots x_{n, t_n}:y_{1, s_1}\ldots y_{m, s_m}]}{\text{corresponding relations}}\right)~~~~\begin{array}{c}
1\leq t_i\leq\alpha_i\\
1\leq s_j\leq\beta_j \end{array}.
\]
which is singular.  Now, going back to the quiver created in the first step, we get a Veronese map $\nu_{\omega}:\Pb^{\alpha+\beta-1}\rightarrow\Pb^{ {{\omega+\alpha+\beta-1}\choose {\omega}}-1}$ and  Veronese isomorphism $X\stackrel{\nu_{\omega}}{\cong}\nu_{\omega}(X)=X'$, so $X'$  is singular. Now, putting the $X'$ factor back in the invariants, we get a Segre embedding  which is singular.

Now let $Q$ be a simple quiver with a set of edge connected cycles. Let $C_Q$ represent a cycle which is by itself simple. Then $C_Q$ is singular as a cycle by its own (with weights adjusted). Now we show that $X_Q$ is singular using a singular sub-variety, i.e. $X_{C_Q}$. To do so, let $A$ be the set of all invariants of $Q$ and let  $M$ be an invariant of $Q$. If the degree of each letter which corresponds to an arrow in $Q\backslash C$ is equal to the multiplicity of that arrow, we put $M$ in a set $B$. Now $X_{C_Q}\cong\Proj\left(\frac{\Cb[M\in B]}{f_1, \ldots, f_k}\right)$ and  is a singular sub-variety of  $X_Q\cong\Proj\left(\frac{\Cb[M\in A]}{f_1, \ldots, f_k, g_1, \ldots, g_l}\right)$ by Lemma 4.4. Let $p$ be a point of singularity of $X_{C_Q}$. Then the Jacobian matrix of $X_{C_Q}$ has a lower rank at $p$ than almost all other points. Then the Jacobian matrix of $X_Q$ has also a lower rank at $p$ than almost all other points.
\end{proof}

\begin{lemma}
Let $Q$ be a quiver and with no proper cycle. Then $X_Q$ is direct product of projective spaces.
\end{lemma}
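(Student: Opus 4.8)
The plan is to induct on the number of edges of the supporting quiver $Q^s$, peeling off one edge at a time with Proposition \ref{X_Q_P} and identifying the factor contributed by a single edge as a projective space via a Veronese map. First I would record the translation of the hypothesis: \q{$Q$ has no proper cycle} is exactly the statement that $Q^s$ is a forest. Indeed, a proper cycle of $Q$ uses at most one arrow from each bunch and so descends to a genuine cycle of $Q^s$, while conversely any cycle of $Q^s$ lifts to a proper cycle of $Q$ by choosing one arrow from each bunch. I may assume $Q$ is connected (otherwise I treat the components separately), so that $Q^s$ is a tree.

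Next I would settle the single-edge computation, which is both the base case and the source of each projective-space factor. Let $P$ consist of two vertices joined by a bunch $X$ of multiplicity $m$, carrying its adjusted (canonical) weight, which assigns $\mp m$ and $\pm m$ to the two vertices. An invariant monomial is then $x_1^{a_1}\cdots x_m^{a_m}$ with $a_1+\cdots+a_m=m$; these are precisely the degree-$m$ monomials in the $m$ arrow-variables, so
\[
X_P=\Proj\Cb[\,x_1^{a_1}\cdots x_m^{a_m}\mid \textstyle\sum_i a_i=m\,]
\]
is the image of the $m$-th Veronese embedding of $\Pb^{m-1}$, whence $X_P\cong\Pb^{m-1}$. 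For $m=1$ this is a single point $\Pb^{0}$, contributing trivially to a product.

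For the inductive step I would choose a leaf $v$ of the tree $Q^s$, with $X$ the bunch of multiplicity $m$ joining $v$ to its unique neighbour $u$, and take $P$ to be the subquiver on $\{v,u\}$ carrying $X$ with adjusted weights, so that $X_P\cong\Pb^{m-1}$ by the previous paragraph. Contracting $P$ to a single vertex $p$ produces $\QP$, whose supporting quiver is $Q^s$ with the edge $uv$ contracted — again a tree. In particular $p$ lies in no proper cycle of $\QP$, so the hypothesis of Proposition \ref{X_Q_P} is met and
\[
X_Q\cong X_P\times X_\QP\cong \Pb^{m-1}\times X_\QP .
\]
Since $\QP$ still has no proper cycle and has one fewer edge, the induction hypothesis expresses $X_\QP$ as a product of projective spaces, giving the claim.

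The hard part is purely the bookkeeping that the contraction preserves every hypothesis needed to close the induction: contracting a subtree of a tree yields a tree, so no proper cycles are created (hence Proposition \ref{X_Q_P} applies and the inductive hypothesis remains in force), and the bunch $X$ becomes loops that are discarded in the construction of $\QP$ rather than creating a directed cycle. Once this is checked the argument is routine; the genuine mathematical inputs are only the Veronese identification of the single-edge factor and the product decomposition furnished by Proposition \ref{X_Q_P}.
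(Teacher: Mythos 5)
Your proposal is correct and follows exactly the route the paper intends: the paper's own proof is the single line ``It is obvious using Proposition \ref{X_Q_P},'' and your induction on the edges of the tree $Q^s$, with the Veronese identification $X_P\cong\Pb^{m-1}$ for a single bunch of multiplicity $m$, is precisely the detail that line leaves to the reader. No gaps.
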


\begin{proof}
It is obvious using Proposition \ref{X_Q_P}.
\end{proof}

\begin{lemma}
Let $Q$ be a quiver with a contractible arrow $x$. If $X_Q$ is smooth, then its blow down $X_{Q_x}$ is smooth too.
\end{lemma}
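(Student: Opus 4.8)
The plan is to use the dichotomy supplied by Theorem \ref{X_Q/alpha}. If $X_Q=X_{Q_x}$ there is nothing to prove, so I concentrate on the case where $X_{Q_x}$ is a genuine blow down of $X_Q$. From the proof of Theorem \ref{X_Q/alpha} I extract the precise combinatorial picture: the fan of $X_{Q_x}$ is obtained from that of $X_Q$ by deleting a single ray $\rho_x$, and this ray satisfies
\[
\rho_x=\rho_{y_1}+\cdots+\rho_{y_s},
\]
where $\rho_{y_1},\ldots,\rho_{y_s}$ are neighbouring rays spanning a cone $\sigma$ of the fan of $X_{Q_x}$. Equivalently, passing from $X_{Q_x}$ to $X_Q$ is the star subdivision along $\sigma$ in the sense of \cite[Definition 3.3.17]{CLS}, the new ray being placed at the sum of the primitive generators of $\sigma$. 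The decisive point is that each $\rho_{y_i}$ occurs with coefficient exactly $1$ in this relation; this is forced by Proposition \ref{vertex degree vs rays}, and it is the feature that makes the descent of smoothness possible.

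The heart of the argument is then a local lattice computation, using the standard criterion that a toric variety is smooth if and only if the primitive generators of every cone in its fan form part of a $\Zb$-basis of the lattice. The cones of the fan of $X_{Q_x}$ not containing $\sigma$ already appear in the fan of $X_Q$, hence are smooth by hypothesis, so it suffices to treat a cone $\tau=\text{Cone}(\rho_{y_1},\ldots,\rho_{y_s},w_1,\ldots,w_m)$ having $\sigma$ as a face. In the star subdivision, $\tau$ is replaced by the cones $\text{Cone}(\rho_x,\rho_{y_1},\ldots,\widehat{\rho_{y_i}},\ldots,\rho_{y_s},w_1,\ldots,w_m)$, each smooth because $X_Q$ is smooth. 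I take the one omitting $\rho_{y_s}$, whose generators $\{\rho_x,\rho_{y_1},\ldots,\rho_{y_{s-1}},w_1,\ldots,w_m\}$ therefore form part of a lattice basis. Substituting $\rho_{y_s}=\rho_x-\rho_{y_1}-\cdots-\rho_{y_{s-1}}$ shows that the generating set of $\tau$ is obtained from this basis by replacing the single element $\rho_x$ with $\rho_{y_s}$; since the coefficient of the replaced generator $\rho_x$ in the substitution equals $+1$, the change is an elementary shear and hence unimodular. Thus $\{\rho_{y_1},\ldots,\rho_{y_s},w_1,\ldots,w_m\}$ is again part of a lattice basis and $\tau$ is smooth.

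Because every cone of the fan of $X_{Q_x}$ is either inherited unchanged from $X_Q$ or contains $\sigma$ and is therefore covered by the computation above, all cones are smooth and $X_{Q_x}$ is smooth, which will complete the proof.

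I expect the genuine obstacle to be conceptual rather than computational: for an arbitrary blow down of a smooth toric variety the blown-down variety need \emph{not} be smooth, since blowing up can resolve singularities. What rescues the statement is precisely that the deleted ray is the \emph{unweighted} sum $\rho_x=\sum_i\rho_{y_i}$ of the generators of $\sigma$; had any coefficient differed from $1$, the transition matrix between the two generating sets could fail to be unimodular and the descent of smoothness would break. Consequently the real care must go into invoking Theorem \ref{X_Q/alpha} and Proposition \ref{vertex degree vs rays} accurately enough to guarantee that all these coefficients are $1$, after which the unimodularity is the short shear-matrix observation above.
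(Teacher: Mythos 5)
Your proposal is correct and follows essentially the same route as the paper: both arguments rest on the fact that the deleted ray $\rho_x$ is the coefficient-one sum of the rays of the blown-down cone, so that the cones of the star subdivision reassemble into smooth cones of the coarser fan. The paper compresses this into one line (asserting that the union of the cones containing $\rho_x$, with $\rho_x$ removed, is a single smooth cone, citing Fulton), whereas you spell out the unimodular shear explicitly; the content is the same.
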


\begin{proof}
Let  $X_Q$ be smooth and $(N, \Delta)$ be its fan representation. Then each cone is generated by a set of rays which are basis for $N$. Now let $\sigma_1, \ldots, \sigma_m$ be the cones which contain $\rho_x$. Each of these cones is generated by a set of rays $($including $\rho_x)$ which are a basis for $N$. Since $\rho_x$ is a sum of rays in $\sigma_1, \ldots, \sigma_m$, in the fan $(N, \Delta_{\rho_x})$ with the ray corresponding to $x$ removed, the cone $\sigma=\bigcup_{i=1}^m\sigma_i$ is a smooth cone. See \cite[section 2.1]{F}
\end{proof}

\begin{corollary}
Let $X$ be a toric quiver variety. Then if $X$ is smooth, it is a direct product of projective spaces or blowups of such varieties. 
\end{corollary}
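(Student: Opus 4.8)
The plan is to run the classification of Section~\ref{section5} backwards through the contraction procedure of Section~\ref{section4}. Let $X=X_Q$ be a smooth toric quiver variety. Using the product decompositions furnished by Proposition~\ref{X_Q_P}, and since a product is smooth if and only if each factor is, it suffices to treat a connected quiver $Q$. As observed after the definition of a simple quiver, any connected $Q$ can be reduced to a simple quiver $Q_s$ by successively contracting contractible arrows $x_1,\dots,x_r$. At each contraction Theorem~\ref{X_Q/alpha} gives a dichotomy: either the variety is unchanged, or the smaller variety is a blow down of the larger one. Hence $X_Q$ is obtained from $X_{Q_s}$ by a finite sequence of blowups. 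The lever that turns this into a proof is the lemma that a blow down of a smooth toric quiver variety is again smooth: applying it step by step along the chain, the smoothness of $X_Q$ descends to $X_{Q_s}$.

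It then remains to identify the smooth simple model $X_{Q_s}$. Because $Q_s$ is simple and $X_{Q_s}$ is smooth, the contrapositive of Proposition~\ref{cycle singular} forbids any proper cycle in $Q_s$: such a cycle is non-directed and would force $X_{Q_s}$ to be singular. The only cycles $Q_s$ may still carry are two-vertex bundles of parallel arrows, which are not proper and which contribute projective-space factors rather than singularities. With \q{$Q_s$ has no proper cycle} in hand, the lemma that for a quiver without a proper cycle the variety $X_Q$ is a direct product of projective spaces (itself a consequence of Proposition~\ref{X_Q_P}) identifies $X_{Q_s}$ as such a product.

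Combining the two steps, $X_Q$ is a finite iterated blowup of a product of projective spaces, which is precisely the asserted conclusion: it is that product when every contraction fell under the first alternative of Theorem~\ref{X_Q/alpha}, and otherwise a blowup of such a product. I expect the main difficulty to lie not in any single estimate but in the bookkeeping where the two reductions meet. One must verify that every arrow contracted en route to $Q_s$ is genuinely contractible, so that the standing hypothesis of no directed cycle is preserved and both Theorem~\ref{X_Q/alpha} and the smoothness descent lemma apply at each stage; and one must carefully separate the two-vertex multiple-arrow bundles, which are smooth and yield the $\Pb^{m-1}$ factors, from the proper cycles of Proposition~\ref{cycle singular}, which are the true origin of singularities. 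It is also worth stating explicitly that \q{blowups of such varieties} is to be understood as allowing iterated blowups, so that composing the blowups produced by the successive contractions stays inside the stated class.
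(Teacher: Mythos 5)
Your proposal is correct and follows exactly the route the paper intends (the corollary is stated without an explicit proof, as a direct assembly of Proposition \ref{X_Q_P}, Theorem \ref{X_Q/alpha}, Proposition \ref{cycle singular}, and the two lemmas of Section \ref{section5}): contract to a simple quiver, descend smoothness through the blow downs, rule out proper cycles in the simple model, and identify it as a product of projective spaces. Your explicit flagging of the distinction between proper cycles and two-vertex multiple-arrow bundles, and of the need to check contractibility at each stage, matches the paper's framework and adds useful precision.
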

\end{section}

\newpage
~


\begin{thebibliography}{999}

\bibitem{CLS}  Cox, D. A., Little, J. B., \& Schenck, H. K. (2011). Toric varieties (Vol. 124). American Mathematical Soc.. university press, 2016.
\bibitem{F}  Fulton, W. Introduction to Toric Varieties. (AM-131), Volume 131. Princeton university press, 2016.
\bibitem{Ha}  Hartshorne, R. Algebraic geometry. Vol. 52. Springer Science \& Business Media, 2013.
\bibitem{Hi}  Hille, L. \q{Toric quiver varieties.} Algebras and modules, II (Geiranger, 1996) 24 (1998): 311-325.
\bibitem{K}  King, A. \q{Moduli of Representations of Finite Dimensional Algebras.} Quarterly J. Math. Oxford 45 (1994), 515 - 530.
\bibitem{R}  Reineke, M. \q{Moduli of representations of quivers.} arXiv preprint arXiv:0802.2147 (2008).
\bibitem{S}  Sturmfels, B. \q{Toric quiver varieties.} Lecture notes given in Tamas Hausel's seminar (November 15, 2000).
\end{thebibliography}
\end{document}